\documentclass{article}
\usepackage[a4paper]{geometry}
\usepackage{amsmath,amssymb,amsthm,enumitem,environ, color}
\usepackage[initials]{amsrefs}
    
\newtheorem{theorem}{Theorem}[section]
\newtheorem{lemma}[theorem]{Lemma}
\newtheorem{corollary}[theorem]{Corollary}

\numberwithin{equation}{section}

\renewcommand{\geq}{\geqslant}
\renewcommand{\leq}{\leqslant}

\setlist[enumerate]{leftmargin=20pt,itemsep=0pt,topsep=0pt}
\setlist[enumerate,1]{label=\emph{(\roman*)},ref={(\roman*)}}

\setlength{\parskip}{\medskipamount}

\makeatletter
\renewcommand\section{\@startsection {section}{1}{\z@}%
                                   {-3.5ex \@plus -1ex \@minus -.2ex}%
                                   {1.3ex \@plus.2ex}%
                                   {\normalfont\large\scshape}}
\makeatother

\let\Im\relax
\DeclareMathOperator{\Im}{Im}
\let\Re\relax
\DeclareMathOperator{\Re}{Re}
\DeclareMathOperator{\degree}{deg}

\title{ \vspace{-5ex}\bf \Large A hyperbolic-distance inequality for holomorphic maps}

\makeatletter
\renewcommand*{\@fnsymbol}[1]{\hspace*{-10pt}}
\makeatother

\author{Argyrios Christodoulou and Ian Short\thanks{2010 Mathematics Subject Classification: Primary 30F45; Secondary 30C80.}\thanks{Key words: hyperbolic metric, holomorphic map, hyperbolic Riemann surface.}\thanks{School of Mathematics and Statistics, The Open University, Milton Keynes, MK7 6AA, United Kingdom.}}

\date{\vspace{-5ex}}

\begin{document}

\maketitle

\begin{abstract}
We prove an inequality which quantifies the idea that a holomorphic self-map of the disc that perturbs two points is close to the identity function.  
\end{abstract}

%%%%%%%%%%
\section{Introduction}
%%%%%%%%%%

The principal objective of this paper is to prove the following theorem, in which $\mathbb{D}$ denotes the open unit disc in the complex plane with hyperbolic metric $\rho$.

\begin{theorem}\label{cai}
Suppose that $f$ is a holomorphic self-map of $\mathbb{D}$ and $a,b,z\in\mathbb{D}$, with $a\neq b$. Then
\[
\rho(f(z),z) \leq K\big(\rho(f(a),a)+\rho(f(b),b)\big),
\]
where
\[
K= \frac{\exp\left(\rho(z,a)+\rho(a,b)+\rho(b,z)\right)}{\rho(a,b)}.
\]
\end{theorem}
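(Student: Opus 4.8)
The governing tool is the Schwarz--Pick lemma, which gives $\rho(f(u),f(v))\leq\rho(u,v)$ for all $u,v\in\mathbb{D}$. Combined with the triangle inequality it shows that the displacement function $\delta(w):=\rho(f(w),w)$ is $2$-Lipschitz for $\rho$, whence $\rho(f(z),z)\leq\delta(a)+2\rho(a,z)$. This additive bound is too weak for the theorem, whose real content is that small displacement at two distinct points forces small displacement everywhere; it quantifies the rigidity fact that a self-map fixing both $a$ and $b$ is the identity. So the self-map hypothesis must be exploited beyond the bare contraction property: mere boundedness of $f-\mathrm{id}$ will not do, as a Blaschke product vanishing at $a$ and $b$ shows that a bounded holomorphic function can be small at two points and large in between.

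I would first normalise. Since $\rho$ is $\mathrm{Aut}(\mathbb{D})$-invariant and conjugating $f$ by $\gamma\in\mathrm{Aut}(\mathbb{D})$ merely replaces $\delta(w)$ by $\delta(\gamma^{-1}w)$, I may pre- and post-compose $f$ with automorphisms without altering any quantity in the statement, all of which are expressed through $\rho$. I would use this freedom to send $z$ to $0$, reducing the theorem to an estimate for $|f(0)|$ under the hypothesis that $f$ nearly fixes the two points now playing the role of $a$ and $b$. Composing further with the automorphism carrying $f(a)$ to $a$ produces a map nearly fixing the origin, to which the Schwarz lemma applies and for which the equality case becomes the operative phenomenon.

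The shape of $K$ points to its two remaining ingredients. The factor $\exp(\rho(z,a)+\rho(a,b)+\rho(b,z))$ I expect to originate in a Harnack-type inequality: the sharp constant $e^{\rho(u,v)}$ in Harnack's inequality for positive harmonic functions is precisely the source of such exponentials, so I would attach to $f$ a positive harmonic function — via the Herglotz/Berkson--Porta representation of the auxiliary holomorphic function measuring the deviation of $f$ from an isometry — and use it to propagate control from the geodesic through $a$ and $b$ outward to $z$; the perimeter $\rho(z,a)+\rho(a,b)+\rho(b,z)$ then tallies the geodesic travel needed to reach $z$ from the segment $[a,b]$. The factor $1/\rho(a,b)$ together with the sum $\rho(f(a),a)+\rho(f(b),b)$ I expect to appear as a difference quotient along that geodesic: near-fixing both endpoints at distance $\rho(a,b)$ forces the hyperbolic derivative of $f$ along the geodesic to differ from the identity's by at most a multiple of $(\delta(a)+\delta(b))/\rho(a,b)$.

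The main obstacle is the core quantitative rigidity estimate, namely converting the qualitative statement ``two fixed points $\Rightarrow$ identity'' into the precise multiplicative bound with exactly the constant $K$. Two difficulties stand out. First, two applications of Schwarz--Pick alone are insufficient: they place $f(z)$ only inside a lens-shaped region cut out by the balls about $f(a)$ and $f(b)$, whose corners are $z$ and its reflection across the geodesic through $a$ and $b$, so $f(z)$ need not be near $z$ at all and the equality case must be quantified. Second, assembling the Harnack factor and the geodesic difference quotient into exactly $K$, rather than some inexplicit constant, is where I expect the bulk of the estimation to lie.
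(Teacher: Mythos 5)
Your proposal is a plan, not a proof: the entire quantitative core of the theorem is deferred rather than carried out. You correctly diagnose the situation---Schwarz--Pick alone only gives the $2$-Lipschitz bound on $\delta(w)=\rho(f(w),w)$ and confines $f(z)$ to a lens that need not be near $z$, so the two-point rigidity must be quantified---but your response to that diagnosis consists of expectations (``I expect to originate in a Harnack-type inequality'', ``I expect to appear as a difference quotient''), and you close by naming, as the ``main obstacle'', precisely the estimate that constitutes the theorem. Nothing in the proposal produces an inequality of the form $\rho(f(z),z)\leq K\big(\rho(f(a),a)+\rho(f(b),b)\big)$ with any explicit constant, let alone with the stated $K$. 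In particular, the Harnack/Herglotz mechanism is never set up: no positive harmonic function is actually attached to $f$, no inequality is derived from it, and no argument converts control of that function back into control of hyperbolic displacement. (The germ of the idea is sound: if $f$ fixes $0$ and $g(w)=f(w)/w$ is the Schwarz quotient, then $1-g$ has nonnegative real part, so Harnack would indeed produce an $e^{\rho(z,a)}$ factor; but you would still have to dominate $|1-g|$, not merely $\Re(1-g)$, and to translate $|1-g|$ into $\rho(f(w),w)$, and none of this is done.)

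For comparison, the paper fills the gap you left open in two concrete steps, neither of which appears in your outline. First it proves a fixed-point version (Theorem~\ref{pkc}): assuming $f(b)=b=0$, it studies exactly the quotient $g(w)=f(w)/w$, which by the Schwarz lemma is again a self-map of $\mathbb{D}$, applies the triangle inequality $|1-g(z)|\leq|1-g(a)|+|g(a)-g(z)|$, and converts all Euclidean quantities into hyperbolic ones via the identities in \eqref{uis}; the Schwarz--Pick lemma applied to $g$ (not just to $f$) yields the factor $e^{\rho(a,z)}$, and the factor $1/\big(4\sinh\tfrac12\rho(a,b)\big)$ arises from $\sinh\tfrac12\rho(a,0)$ in Lemma~\ref{fii}---so the true source of your conjectured $1/\rho(a,b)$ is a hyperbolic sine, not a derivative difference quotient. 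Second, for general $f$, the paper composes with the hyperbolic automorphism $h$ whose axis passes through $b$ and $f(b)$ and which satisfies $hf(b)=b$, applies the fixed-point theorem to $hf$, and controls the correction terms $\rho(f(u),hf(u))$ by Lemma~\ref{qlo}, which rests on Beardon's formula $\sinh\tfrac12\rho(w,h(w))=\cosh\rho(w,\gamma)\sinh\tfrac12\rho(c,h(c))$ for $c$ on the axis $\gamma$. Note also that your chosen normalisation (send $z$ to $0$) is the less useful one; the effective normalisation is to make $f$ fix one of the two interpolation points. Until you execute steps of this kind---or make the Harnack route actually deliver the inequality---you have a statement of intent rather than a proof.
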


A somewhat similar result was obtained for M\"obius transformations acting on the extended complex plane, using the chordal metric, in \cite{BeSh2010}.

A strength of Theorem~\ref{cai} is that the constant $K$ is independent of the function $f$. As a consequence, we can use the theorem to prove quantitative versions of existing results about holomorphic maps close to the identity function. For instance, it is known that if $(f_n)$ is a sequence of holomorphic self-maps of $\mathbb{D}$ such that $f_n(a)\to a$ and $f_n(b)\to b$, for two distinct points $a$ and $b$, then $(f_n)$ converges locally uniformly on $\mathbb{D}$ to the identity function. This can be proven by a normal families argument (see, for example, \cite{Sc1993}*{Theorem 2.4.2}), but such an argument does not give an explicit rate of convergence. In contrast, Theorem~\ref{cai} provides a rate of convergence, which allows us to make stronger statements; for example, the theorem tells us that  if the sum $\sum \rho(f_n(z),z)$ converges for $z=a,b$, then in fact it converges for \emph{any} $z\in\mathbb{D}$. 

If we fix two distinct points $a$ and $b$ in $\mathbb{D}$, then Theorem~\ref{cai} (and the inequality $\rho(z,b)\leq \rho(z,a)+\rho(a,b)$) show us that there is a constant $k$ depending only on $a$ and $b$ for which
\begin{equation}\label{keu}
\rho(f(z),z) \leq ke^{2\rho(z,a)}\big(\rho(f(a),a)+\rho(f(b),b)\big),
\end{equation}
for all $z\in\mathbb{D}$ and all holomorphic self-maps $f$ of $\mathbb{D}$. We now describe an example to show that the expression $e^{2\rho(z,a)}$ in this inequality cannot be  reduced significantly. 

For this example, we switch from $\mathbb{D}$ to the right half-plane model of the hyperbolic plane, denoted by $\mathbb{K}$, and consider holomorphic self-maps of $\mathbb{K}$. We continue to denote the hyperbolic metric by $\rho$, on $\mathbb{K}$ as on $\mathbb{D}$, and we make use of  the formula $\rho(u,v)=\log (v/u)$, for points $u$ and $v$ on the positive real axis, with $u<v$. 

 Let $a=1$ and $b=2$. Let $f_n(w)=w+1/n^2$ and $z_n=1/n$, for $n=1,2,\dotsc$. Then $e^{\rho(z_n,a)}=n$, and
\[
\rho(f_n(z_n),z_n) \sim\frac{1}{n},   \quad    \rho(f_n(a),a) \sim \frac{1}{n^2}
\]
(where, for two positive sequences $(x_n)$ and $(y_n)$, we write $x_n\sim y_n$ to mean that there is a positive constant $\lambda$ such that $x_n/\lambda < y_n <\lambda x_n$, for $n=1,2,\ldots$). So
\[
\frac{\rho(f_n(z_n),z_n)}{\rho(f_n(a),a)} \sim e^{\rho(z_n,a)}.
\]
That is, the quotient of the distortion of $f_n$ at $z_n$ by the distortion of $f_n$ at $a$ grows exponentially with the hyperbolic distance between $z_n$ and $a$. This examples indicates that the expression $e^{2\rho(z,a)}$ in inequality~\eqref{keu} cannot be made any smaller than $e^{\rho(z,a)}$.

The proof of Theorem~\ref{cai} uses the fact that any holomorphic self-map of $\mathbb{D}$ contracts the hyperbolic metric on $\mathbb{D}$, in the sense that if $f$ is such a map, then $\rho(f(z),f(w))\leq \rho(z,w)$, for all $z,w\in\mathbb{C}$, by the Schwarz--Pick Lemma. We observe, however, that the theorem fails for the class of contractions of $\mathbb{D}$ with the hyperbolic metric, which is broader than the class of holomorphic self-maps of $\mathbb{D}$. To see this, consider the function $f(w)=\Re w$, which contracts the hyperbolic metric on $\mathbb{D}$. Given any two distinct real numbers  $a$ and $b$ in $\mathbb{D}$, we have $\rho(f(a),a)=\rho(f(b),b)=0$, but $\rho(f(z),z)$ is positive for nonreal points $z$, so the inequality in Theorem~\ref{cai} fails. 

To illuminate later work, we record the following minor generalisation of Theorem~\ref{cai}. The statement of this result features a conformal automorphism of $\mathbb{D}$, which, by definition, is a bijective holomorphic map from $\mathbb{D}$ onto itself. Such maps are hyperbolic isometries, and using this property we can immediately deduce the result from Theorem~\ref{cai}.

\begin{corollary}\label{xjb}
Suppose that $f$ is a holomorphic self-map of $\mathbb{D}$, $h$ is a conformal automorphism of $\mathbb{D}$, and $a,b,z\in\mathbb{D}$, with $a\neq b$. Then
\[
\rho(f(z),h(z)) \leq K\big(\rho(f(a),h(a))+\rho(f(b),h(b))\big),
\]
where $K=\rho(a,b)^{-1}\exp(\rho(z,a)+\rho(a,b)+\rho(b,z))$.
\end{corollary}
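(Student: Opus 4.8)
The plan is to reduce the corollary to Theorem~\ref{cai} by precomposing $f$ with $h^{-1}$. First I would set $g=h^{-1}\circ f$. Since $h$ is a conformal automorphism of $\mathbb{D}$, its inverse $h^{-1}$ is again a conformal automorphism, and in particular a holomorphic self-map of $\mathbb{D}$; composing this with the holomorphic self-map $f$ shows that $g$ is itself a holomorphic self-map of $\mathbb{D}$. This is the one point that needs checking before the machinery of Theorem~\ref{cai} becomes available.

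Next I would apply Theorem~\ref{cai} to $g$ at the same three points $a,b,z$, obtaining
\[
\rho(g(z),z)\leq K\big(\rho(g(a),a)+\rho(g(b),b)\big).
\]
The crucial observation is that the constant $K$ produced by the theorem depends only on the mutual hyperbolic distances between $a$, $b$ and $z$, and not on the particular map to which the theorem is applied; hence it is exactly the constant named in the statement of the corollary, with no adjustment required.

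To finish, I would exploit the fact that a conformal automorphism of $\mathbb{D}$ is a hyperbolic isometry. Applying $h$ inside each argument and using $h(g(w))=f(w)$ gives $\rho(g(w),w)=\rho(h(g(w)),h(w))=\rho(f(w),h(w))$ for every $w\in\mathbb{D}$. Substituting $w=z,a,b$ into the displayed inequality then converts it term by term into the asserted inequality for $f$ and $h$. I do not expect a genuine obstacle here: the analytic content is carried entirely by Theorem~\ref{cai}, while isometric invariance of $\rho$ does the bookkeeping. The only subtlety worth flagging is the direction of composition—precomposing with $h^{-1}$, rather than postcomposing, is precisely what makes the isometry step align the three points correctly.
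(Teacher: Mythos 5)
Your proof is correct and is essentially the paper's own argument: the paper deduces the corollary from Theorem~\ref{cai} precisely by exploiting that the conformal automorphism $h$ is a hyperbolic isometry, applied to the holomorphic self-map $h^{-1}\circ f$, with the constant $K$ unchanged because it depends only on $a$, $b$, $z$. One minor slip of terminology: $g=h^{-1}\circ f$ is \emph{post}composition of $f$ with $h^{-1}$, not precomposition (and in fact the other composition $f\circ h^{-1}$ would also work, applied at the points $h(a),h(b),h(z)$), but the map you actually use is the right one, so nothing in the argument breaks.
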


Theorem~\ref{cai} and Corollary~\ref{xjb} could of course be stated with any simply connected hyperbolic Riemann surface in place of $\mathbb{D}$. Consider, now, a general hyperbolic Riemann surface $S$. If $S$ has a nonabelian fundamental group, then, in the space of holomorphic self-maps of $S$ (endowed with the topology of compact convergence), the identity function is an isolated point \cite{Ab1989}*{Theorem~1.2.19}. In this case one can certainly obtain a result of a similar type to Theorem~\ref{cai} for $S$, by using a normal families argument (we omit the details), but it is of little consequence; the theorem is only significant for families of functions that come arbitrarily close to the identity function. 

If $S$ has an abelian fundamental group, and is not simply connected, then it must be doubly connected; any such Riemann surface is conformally equivalent either to an annulus $A_r=\{z : 1/r<|z|<r\}$ (where $r>1$) or to the punctured disc $\mathbb{D}^*=\mathbb{D}\setminus\{0\}$. The conformal automorphisms of $A_r$ are rotations, and rotations composed with the map $z\longmapsto 1/z$. The remaining holomorphic self-maps of $A_r$ are all homotopic in the family of continuous self-maps of $A_r$ to constant maps \cite{BeMi2007}*{Corollary~13.7}, and the set of these maps does not contain the identity function in its closure. Thus there is no worthwhile analogue of Theorem~\ref{cai} for annuli either, since any sequence of holomorphic self-maps of $A_r$ that converges to the identity function must eventually consist of rotations, and their geometry is straightforward. 

The punctured disc $\mathbb{D}^*$ is different, however, because there are plenty of nontrivial holomorphic self-maps of $\mathbb{D}^*$ in any neighbourhood of the identity function. To consider these maps, we use the universal covering map $\pi\colon\mathbb{H}\longrightarrow\mathbb{D}^*$ given by $\pi(\zeta)=e^{2\pi i \zeta}$, where $\mathbb{H}$ is the upper half-plane. We use $\mathbb{H}$ for the universal covering space rather than $\mathbb{D}$ because doing so gives a simpler covering map (and $\mathbb{H}$ is also marginally easier to work with than $\mathbb{K}$).

Any holomorphic self-map $f$ of $\mathbb{D}^*$ lifts to a holomorphic self-map $\tilde{f}$ of $\mathbb{H}$ with $\pi\circ \tilde{f}=f\circ \pi$. This map $\tilde{f}$ satisfies $\tilde{f}(\zeta+1)=\zeta+m$, for all $\zeta\in \mathbb{H}$ and some nonnegative integer $m$. The integer $m$ is called the \emph{degree} of $f$, and it is denoted by $\degree(f)$. It can also be defined using the formula
\[
\degree(f)=\frac{1}{2\pi i}\int_\gamma \frac{f'(z)}{f(z)}\,dz,
\]
where $\gamma(t)=\tfrac12e^{2\pi it}$ ($t\in [0,1]$). Since $\degree$ is a continuous function, the set of holomorphic self-maps of $\mathbb{D}^*$ of degree $m$, for any nonnegative integer $m$, is a closed set. The identity function belongs to the set of maps of degree 1. For more on the degree, see \cite{BeMi2007}*{Section~13}.

The origin is a removable singularity for any holomorphic self-map $f$ of $\mathbb{D}^*$, and if $\degree(f)>0$, then the origin is fixed by $f$. It is reasonable, therefore, to expect to obtain a one-point inequality for self-maps of $\mathbb{D}^*$ of positive degree akin to the earlier two-point inequalities. The next theorem is of this type; it is similar to Corollary~\ref{xjb}, but the conformal automorphism of $\mathbb{D}$ is replaced by a holomorphic self-covering map of $\mathbb{D}^*$. Such a map has the form $h(z)=e^{i\theta}z^m$, where $\theta\in\mathbb{R}$ and $m\in\mathbb{N}$. In this theorem, $\lambda^*(z)=-1/(|z|\log|z|)$ is the Riemannian density on $\mathbb{D}^*$ that gives rise to the hyperbolic metric $\rho^*$ on $\mathbb{D}^*$.

\begin{theorem}\label{puc}
Suppose that $f$ is a holomorphic self-map of $\mathbb{D}^*$, $h$ is a holomorphic self-covering map of $\mathbb{D}^*$, and $\degree(f)=\degree(h)>0$. Suppose also that $a,z\in\mathbb{D}^*$. Then
\[
\rho^*(f(z),h(z)) \leq L^3\rho^*(f(a),h(a)),
\]
where $L= 8\lambda^*(a)\exp \rho^*(z,a)$.
\end{theorem}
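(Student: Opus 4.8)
The plan is to deduce Theorem~\ref{puc} from the disc inequality of Theorem~\ref{cai}, the crucial observation being that a self-map of $\mathbb{D}^*$ of positive degree fixes the puncture, so the origin can serve as a second base point at which the relevant maps automatically agree. This turns the two-point inequality into a one-point inequality without having to introduce a genuine second point.

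First I would factor $f$ through $h$. On lifting to $\mathbb{H}$ the covering $h$ has the affine lift $\tilde h(\zeta)=m\zeta+c$, where $m=\degree h$, and since $\degree f=\degree h=m$ the map $\tilde g=\tilde h^{-1}\circ\tilde f$ satisfies $\tilde g(\zeta+1)=\tilde g(\zeta)+1$. Thus $\tilde g$ descends to a self-map $g$ of $\mathbb{D}^*$ of degree $1$ with $f=h\circ g$, and because $\degree g=1>0$ the map $g$ extends across the removable singularity at $0$ to a holomorphic self-map of $\mathbb{D}$ fixing $0$. As $h$ is holomorphic it does not increase the metric $\rho^*$ (the Schwarz--Pick lemma for $\mathbb{D}^*$), which gives the clean one-sided bound
\[
\rho^*(f(z),h(z))=\rho^*\big(h(g(z)),h(z)\big)\leq\rho^*(g(z),z).
\]

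Next I would apply Theorem~\ref{cai} to $g$ on $\mathbb{D}$, taking the two base points to be $a$ and $0$. Since $g(0)=0$ the contribution $\rho_{\mathbb{D}}(g(0),0)$ vanishes, and the inequality collapses to the one-point form $\rho_{\mathbb{D}}(g(z),z)\leq K_0\,\rho_{\mathbb{D}}(g(a),a)$ with $K_0=\rho_{\mathbb{D}}(a,0)^{-1}\exp(\rho_{\mathbb{D}}(z,a)+\rho_{\mathbb{D}}(a,0)+\rho_{\mathbb{D}}(0,z))$. It then remains to translate between the ambient metric $\rho_{\mathbb{D}}$ and the punctured-disc metric $\rho^*$. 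On the left I would estimate $\rho^*(g(z),z)$ by integrating the density $\lambda^*$ along a suitable path from $z$ to $g(z)$ (for instance the $\mathbb{D}$-geodesic, adjusted to keep away from the puncture), which costs a factor comparable to $\lambda^*(z)/\lambda_{\mathbb{D}}(z)$, where $\lambda_{\mathbb{D}}$ is the density of $\mathbb{D}$; on the right the inclusion $\mathbb{D}^*\subset\mathbb{D}$ gives $\rho_{\mathbb{D}}(g(a),a)\leq\rho^*(g(a),a)$ at no cost. Finally I would pass from $\rho^*(g(a),a)$ back to $\rho^*(f(a),h(a))=\rho^*(h(g(a)),h(a))$ by using that the covering $h$ is a local isometry.

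The main obstacle is this final conversion together with the bookkeeping required to reach the precise factor $L^3$. Two points demand care. The density factor produced on the left is anchored at $z$, whereas $L$ is anchored at $a$; I would reconcile this with the standard estimate $\lambda^*(z)\leq\lambda^*(a)\exp\rho^*(z,a)$, which is exactly the source of the factor $\exp\rho^*(z,a)$ in $L$ and which, stacked against the density comparison and the factor $\rho_{\mathbb{D}}(a,0)^{-1}$ hidden in $K_0$, is what forces the third power. More seriously, lifting the $\mathbb{D}^*$-geodesic from $f(a)$ to $h(a)$ through $h$ starts at $g(a)$ but may terminate at a preimage $a'=a\,e^{2\pi i l/m}$ of $h(a)$ different from $a$; estimating $\rho^*(a',a)$, the displacement of $a$ under a rotation, corresponds in $\mathbb{H}$ to a horizontal translation of $\tilde a$ through at most half a period, and I expect to control it by exploiting the freedom to replace $g$ by $\omega g$ for an $m$-th root of unity $\omega$, so that $g(a)$ is the preimage of $f(a)$ nearest to $a$. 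Checking that all these estimates multiply together to yield exactly the bound with constant $L^3$, including the absolute constant $8$, is the delicate computation I would save for last.
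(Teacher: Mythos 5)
Your reduction starts from a genuinely different idea than the paper's, and parts of it are correct: factoring $f=h\circ g$ through the covering (the lift $\tilde g=\tilde h^{-1}\circ\tilde f$ satisfies $\tilde g(\zeta+1)=\tilde g(\zeta)+1$, so it descends to a degree-one self-map $g$ of $\mathbb{D}^*$ that extends over the puncture with $g(0)=0$), the Schwarz--Pick step $\rho^*(f(z),h(z))\leq\rho^*(g(z),z)$, and the root-of-unity normalization: since $\rho^*(h(u),h(v))=\min_{l}\rho^*(u,e^{2\pi il/m}v)$ for the covering $h(w)=w^m$, replacing $g$ by $\omega g$ for a suitable $\omega$ with $\omega^m=1$ gives exactly $\rho^*(g(a),a)=\rho^*(f(a),h(a))$. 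For contrast, the paper never leaves the cover: it applies the strengthened two-point inequality \eqref{now} to $\tilde h^{-1}\circ\tilde f$ at the points $\tilde a$ and $\tilde a+1$ --- two lifts of the \emph{same} point $a$ --- so that the periodicity $\tilde f(\zeta+1)=\tilde f(\zeta)+m$ makes the two displacement terms equal, and every passage between $\rho$ on $\mathbb{H}$ and $\rho^*$ on $\mathbb{D}^*$ that it uses is exact (projection decreases distance; the minimizing lift realizes equality).

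The genuine gap in your argument is the metric comparison on the left-hand side, the step you describe as integrating $\lambda^*$ along the $\mathbb{D}$-geodesic ``adjusted to keep away from the puncture'' at a cost ``comparable to $\lambda^*(z)/\lambda_{\mathbb{D}}(z)$''. No inequality of the form $\rho^*(u,v)\leq C(u)\,\rho_{\mathbb{D}}(u,v)$ can hold with $C$ depending only on $u$ (or on $u$ and $a$): letting $v\to0$ sends the left side to infinity while the right side stays bounded, because the puncture lies at finite $\rho_{\mathbb{D}}$-distance but infinite $\rho^*$-distance. This failure occurs inside your own setup: take $g(w)=\epsilon w$, a degree-one self-map of $\mathbb{D}^*$ fixing $0$. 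Then $\rho_{\mathbb{D}}(g(z),z)\leq 2\rho_{\mathbb{D}}(z,0)$ stays bounded as $\epsilon\to0$, while $\rho^*(g(z),z)\geq\log\bigl(\log|\epsilon z|/\log|z|\bigr)\to\infty$, so your chain $\rho^*(g(z),z)\leq C_1\rho_{\mathbb{D}}(g(z),z)\leq C_1K_0\rho_{\mathbb{D}}(g(a),a)\leq C_1K_0\rho^*(g(a),a)$ cannot be completed by any finite $C_1=C_1(z,a)$, however the path is chosen. The problem is structural rather than bookkeeping: passing through $\rho_{\mathbb{D}}$ discards the radial (cusp) information that $\rho^*$ retains. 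A repair is conceivable --- split into cases according to whether $\rho^*(f(a),h(a))$ is small compared with $\rho_{\mathbb{D}}(z,0)/K_0$ (then $g(z)$ is confined to a hyperbolic ball about $z$ avoiding the puncture, where the density comparison is legitimate) or large (then use the a priori bound $\rho^*(f(z),h(z))\leq2\rho^*(z,a)+\rho^*(f(a),h(a))$ from Schwarz--Pick) --- but none of this is in your proposal, and recovering the explicit constant $L^3$ along that route would require redoing essentially all of the quantitative work.
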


When $h$ is the identity function and $\degree(f)=1$, we obtain a one-point inequality comparable with Theorem~\ref{cai}.

\begin{corollary}
Suppose that $f$ is a holomorphic self-map of $\mathbb{D}^*$ with $\degree(f)=1$ and $a,z\in\mathbb{D}^*$. Then 
\[
\rho^*(f(z),z)\leq L^3 \rho^*(f(a),a),
\]
where $L= 8\lambda^*(a)\exp \rho^*(z,a)$.
\end{corollary}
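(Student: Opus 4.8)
The plan is to deduce this corollary directly from Theorem~\ref{puc} by taking the map $h$ in that theorem to be the identity function. To justify this, I first need to confirm that the identity is a legitimate choice for $h$, that is, that it is a holomorphic self-covering map of $\mathbb{D}^*$ and that its degree agrees with that of $f$.

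For the first point, recall from the discussion preceding Theorem~\ref{puc} that every holomorphic self-covering map of $\mathbb{D}^*$ has the form $h(z)=e^{i\theta}z^m$ with $\theta\in\mathbb{R}$ and $m\in\mathbb{N}$. The identity function is precisely the special case $\theta=0$, $m=1$, so it is indeed such a covering map. For the second point, it was recorded earlier that the identity function has degree $1$. Since we are given that $\degree(f)=1$, it follows that $\degree(f)=\degree(h)=1>0$, and hence the full set of hypotheses of Theorem~\ref{puc} is satisfied with $h=\mathrm{id}$.

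With these checks in place, Theorem~\ref{puc} applies and yields
\[
\rho^*(f(z),h(z)) \leq L^3\rho^*(f(a),h(a)),
\]
where $L= 8\lambda^*(a)\exp \rho^*(z,a)$. Substituting $h(z)=z$ and $h(a)=a$ gives exactly the asserted inequality $\rho^*(f(z),z)\leq L^3 \rho^*(f(a),a)$. As this is merely a specialisation of the preceding theorem, there is no substantive obstacle to overcome; the only point requiring any attention is the verification that the identity function fulfils the two conditions imposed on $h$, and both of these follow at once from the facts established before Theorem~\ref{puc}.
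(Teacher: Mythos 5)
Your proof is correct and is exactly the paper's own argument: the corollary is obtained by specialising Theorem~\ref{puc} to $h=\mathrm{id}$, which is the covering map $e^{i\theta}z^m$ with $\theta=0$, $m=1$, of degree $1$. Your added verification that the identity satisfies the hypotheses on $h$ is a welcome, if routine, elaboration of what the paper leaves implicit.
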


\textsl{Acknowledgements}. The authors thank the referee for insightful suggestions; in particular, for directing us to a version of the one-point inequality for covering maps. 

\section{Holomorphic maps with a fixed point}

In this section we prove the following theorem, which is a version of our main result, Theorem~\ref{cai}, for holomorphic self-maps of the disc with a fixed point.

\begin{theorem}\label{pkc}
Suppose that $a,b,z\in\mathbb{D}$, with $a\neq b$, and $f$ is a holomorphic self-map of $\mathbb{D}$ that fixes $b$. Then
\[
\rho(f(z),z) \leq M\rho(f(a),a),
\]
where
\[
M= \frac{\exp(\rho(a,z)+\rho(z,b))}{4\sinh\tfrac12\rho(a,b)}.
\]
\end{theorem}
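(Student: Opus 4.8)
The plan is to reduce to the case $b=0$ and study the single auxiliary function $h(w)=1-f(w)/w$. Since conformal automorphisms of $\mathbb{D}$ are hyperbolic isometries and preserve all of the distances $\rho(z,a)$, $\rho(a,b)$, $\rho(z,b)$ that appear in $M$, I would conjugate $f$ by the automorphism $T_b(w)=(w-b)/(1-\bar b w)$ so that the fixed point is moved to the origin; this changes neither side of the claimed inequality. With $f(0)=0$, the Schwarz lemma gives $f(w)=wg(w)$ with $|g|\le 1$, so $h=1-g$ is holomorphic on $\mathbb{D}$ with $\Re h\ge 0$ and $|h|\le 2$; thus $h$ is a bounded function of non-negative real part, and $f(w)-w=-wh(w)$.

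Next I would record the exact displacement identity $\sinh\tfrac12\rho(f(w),w)=|f(w)-w|\big/\sqrt{(1-|f(w)|^2)(1-|w|^2)}=|w||h(w)|\big/\sqrt{(1-|f(w)|^2)(1-|w|^2)}$. Applying $t\le\sinh t$ and $|f(w)|\le|w|$ at $w=z$ gives the upper estimate $\rho(f(z),z)\le\sinh\rho(z,b)\,|h(z)|$, and $\sinh\rho(z,b)\le\tfrac12 e^{\rho(z,b)}$ produces the factor $e^{\rho(z,b)}$ in $M$. The factor $e^{\rho(z,a)}$ I would extract from a Harnack/Schwarz--Pick argument for $h$: since $h$ maps $\mathbb{D}$ into the right half-plane $\{\Re w>0\}$, it contracts the hyperbolic metric there, and because $\log$ carries that half-plane onto the strip $\{|\Im w|<\pi/2\}$ --- on which the real part is $1$-Lipschitz with respect to the hyperbolic metric of the strip --- one obtains $\big|\log|h(z)|-\log|h(a)|\big|\le\rho(z,a)$, hence $|h(z)|\le e^{\rho(z,a)}|h(a)|$. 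This half-plane contraction is the conceptual heart of the estimate, and it is where the exponential dependence on $\rho(z,a)$ comes from.

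It then remains to bound $|h(a)|$ from above by an appropriate multiple of $\rho(f(a),a)$, the multiple being $1/\!\left(4\sinh\tfrac12\rho(a,b)\right)$ once one accounts for the factor $\tfrac12 e^{\rho(z,a)+\rho(z,b)}$ collected above together with $\sinh\tfrac12\rho(a,b)=|a|/\sqrt{1-|a|^2}$ (recall $b=0$). Chaining the three estimates would then give $\rho(f(z),z)\le\tfrac12 e^{\rho(z,a)+\rho(z,b)}|h(a)|\le M\rho(f(a),a)$, which is the desired inequality.

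The main obstacle is precisely this lower estimate at $a$ and the resulting sharp constant. The clean comparison $\rho(f(a),a)\ge 2|f(a)-a|/\sqrt{1-|a|^2}$ that the above chain wants is valid when $f$ is close to the identity, but it fails when $f$ is far from it (for instance $f(w)=-w$ with $a$ near $\partial\mathbb{D}$), so the comparison between hyperbolic displacement and $|h|$ cannot be pushed through uniformly by elementary inequalities alone. I expect to resolve this by invoking the a priori bounds $\rho(f(a),a)\le 2\rho(a,b)$ and $\rho(f(z),z)\le 2\rho(z,b)$ --- both immediate from the Schwarz--Pick inequality and $f(b)=b$ --- to dispose of the regime where $f$ is far from the identity, in which $M\rho(f(a),a)$ already dominates $2\rho(z,b)$, while the half-plane estimate of the previous paragraph governs the near-identity regime. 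Matching the constants across these two regimes, and in particular tracking the elementary inequalities sharply enough to reproduce the factor $4\sinh\tfrac12\rho(a,b)$, is the delicate point of the argument.
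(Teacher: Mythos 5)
Your reduction to $b=0$, the displacement identity $\sinh\tfrac12\rho(f(w),w)=|w|\,|h(w)|\big/\sqrt{(1-|f(w)|^2)(1-|w|^2)}$, and the Harnack-type estimate $|h(z)|\leq e^{\rho(z,a)}|h(a)|$ for the half-plane-valued function $h(w)=1-f(w)/w$ are all correct (the degenerate cases --- $h$ constant, or $h$ with a zero, which forces $f$ to be the identity --- are harmless). This part is a genuinely different route from the paper's: there one sets $g(w)=f(w)/w$, starts from $|1-g(z)|\leq|1-g(a)|+|g(a)-g(z)|$, and extracts the factor $e^{\rho(a,z)}$ from the Schwarz--Pick lemma in the form $e^{\rho(g(a),g(z))}\leq e^{\rho(a,z)}$, at the cost of a delicate lower bound on $\cosh\tfrac12\rho(g(z),0)\cosh\tfrac12\rho(g(a),0)$; your strip argument produces the same factor in one clean step.

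The endgame, however, has a genuine gap, and the two-regime patch you propose cannot be made to work. Your chain needs $|h(a)|\leq\rho(f(a),a)\big/\bigl(2\sinh\tfrac12\rho(a,0)\bigr)$, which by Lemma~\ref{fii} is equivalent to $\cosh\tfrac12\rho(f(a),0)\geq \sinh t/t$ with $t=\tfrac12\rho(f(a),a)$; as you observe, this fails when $f(a)$ is much nearer to $0$ than its displacement. The simplest such instance already defeats both of your regimes: take $f\equiv 0$ (which fixes $b=0$), $\rho(a,0)=\rho(z,0)=1$, and $z$ close to $a$. The theorem demands $\rho(f(z),z)=\rho(z,0)=1\leq M\rho(f(a),a)=e^{1+\rho(z,a)}/\bigl(4\sinh(1/2)\bigr)\approx 1.30$, but your chain (here $h\equiv 1$) yields only $\tfrac12 e^{1+\rho(z,a)}\approx 1.36$, and your a priori bound yields $2\rho(z,0)=2$; neither reaches the target. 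Moreover, for $f\equiv0$ the ratio of the chain's bound to the target is $2\sinh\bigl(\tfrac12\rho(a,0)\bigr)/\rho(a,0)$, which is unbounded, so no adjustment of absolute constants can close the splitting. The missing idea is exactly the paper's final step: do not trade $\sinh\tfrac12\rho(f(z),z)$ for $\rho(f(z),z)$ at the outset. Keeping $\sinh$ on both sides, your own estimates give
\[
\sinh\tfrac12\rho(f(z),z)\leq\tfrac12\sinh\rho(z,0)\,|h(z)|\leq\frac{e^{\rho(z,a)+\rho(z,0)}}{4\sinh\tfrac12\rho(a,0)}\,\sinh\tfrac12\rho(f(a),a),
\]
since $|h(a)|=\sinh\tfrac12\rho(f(a),a)\big/\bigl(\sinh\tfrac12\rho(a,0)\,\cosh\tfrac12\rho(f(a),0)\bigr)$ and $\cosh\tfrac12\rho(f(a),0)\geq1$; this is precisely inequality \eqref{oop} of the paper. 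Then, because $x\longmapsto \sinh x/x$ is increasing for $x>0$, either $\rho(f(z),z)<\rho(f(a),a)$, in which case the theorem is trivial since $M>1$ (using $e^{\rho(a,b)}>4\sinh\tfrac12\rho(a,b)$), or $\rho(f(z),z)/\rho(f(a),a)\leq\sinh\tfrac12\rho(f(z),z)\big/\sinh\tfrac12\rho(f(a),a)\leq M$. With that replacement your argument becomes a complete proof of Theorem~\ref{pkc}, reaching the paper's pivotal inequality by a shorter and arguably more conceptual route.
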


It suffices to prove this theorem when $b=0$, as can be seen by conjugating $f$ by a conformal automorphism of $\mathbb{D}$ that takes $b$ to $0$. So let us assume, henceforth, that $b=0$. We can also assume that $z\neq 0$, because, for $b=0$, the inequality clearly holds when $z=0$.

We recall two formulas for the hyperbolic metric on $\mathbb{D}$ (see  \cite{BeMi2007}*{page~15}), which state that, for $u,v\in\mathbb{D}$,
\begin{equation}\label{uis}
\sinh\tfrac12 \rho(u,v) = \frac{|u-v|}{\sqrt{(1-|u|^2)(1-|v|^2)}},\qquad\cosh\tfrac12 \rho(u,v) = \frac{|1-u\overline{v}|}{\sqrt{(1-|u|^2)(1-|v|^2)}}.
\end{equation}
The equations in the next lemma are merely special cases of these formulas, with $v=0$.

\begin{lemma}\label{iwu}
If $u\in\mathbb{D}$, then
\begin{enumerate}
\item\label{aba}  $\sinh\tfrac12 \rho(u,0) = \dfrac{|u|}{\sqrt{1-|u|^2}}$,
\item\label{abb} $\cosh\tfrac12 \rho(u,0)=\dfrac{1}{\sqrt{1-|u|^2}}$.
\end{enumerate}
\end{lemma}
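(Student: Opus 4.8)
The plan is simply to substitute $v=0$ into the two formulas recorded in \eqref{uis}; no further argument is required. First I would set $v=0$ in the sine formula, observing that $|u-0|=|u|$ and $1-|0|^2=1$, so that the right-hand side $\dfrac{|u-v|}{\sqrt{(1-|u|^2)(1-|v|^2)}}$ collapses to $\dfrac{|u|}{\sqrt{1-|u|^2}}$, which is exactly part~\ref{aba}. I would then do the same with the cosine formula, using $|1-u\overline{0}|=|1|=1$ together with $1-|0|^2=1$, so that $\dfrac{|1-u\overline{v}|}{\sqrt{(1-|u|^2)(1-|v|^2)}}$ reduces to $\dfrac{1}{\sqrt{1-|u|^2}}$, giving part~\ref{abb}.

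There is no genuine obstacle to overcome: the lemma is purely a matter of evaluating the general expressions in \eqref{uis} at the special point $v=0$ and simplifying the elementary quantities $|u-0|$, $|1-u\overline{0}|$, and $1-|0|^2$. The only point worth checking is that the substitution is legitimate, and it is, since $0\in\mathbb{D}$ and hence both identities of \eqref{uis} hold verbatim with $v=0$ for any $u\in\mathbb{D}$.
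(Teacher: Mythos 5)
Your proposal is correct and matches the paper exactly: the paper states that the lemma's equations ``are merely special cases of these formulas, with $v=0$,'' which is precisely the substitution you carry out. Nothing further is needed.
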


We can use the equations in this lemma to replace the square-root terms from the left-hand formula from \eqref{uis} in two ways, to give two more formulas involving the hyperbolic metric, presented in the following lemma.

\begin{lemma}\label{fii}
If $u,v\in\mathbb{D}$, then 
\begin{enumerate}
\item\label{uud} $|u-v| = \dfrac{\sinh\tfrac12 \rho(u,v)}{\cosh\tfrac12 \rho(u,0)\cosh\tfrac12 \rho(v,0)}$,
\item\label{uue} $\dfrac{|u-v|}{|u|} = \dfrac{\sinh\tfrac12 \rho(u,v)}{\sinh\tfrac12 \rho(u,0)\cosh\tfrac12 \rho(v,0)}$.
\end{enumerate}
\end{lemma}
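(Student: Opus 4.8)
The plan is to start from the left-hand formula in \eqref{uis} and simply substitute for the two square-root factors using Lemma~\ref{iwu}, making the substitution differently for each of the two parts.

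First I would rearrange the left-hand equation of \eqref{uis} into the form
\[
|u-v| = \sinh\tfrac12\rho(u,v)\,\sqrt{1-|u|^2}\,\sqrt{1-|v|^2}.
\]
For part~\ref{uud}, I would invert the cosh formula of Lemma~\ref{iwu}\ref{abb} to write each square-root factor as $\sqrt{1-|u|^2}=1/\cosh\tfrac12\rho(u,0)$ and $\sqrt{1-|v|^2}=1/\cosh\tfrac12\rho(v,0)$, and substitute these into the displayed identity; the two cosh terms then appear in the denominator, giving exactly the claimed expression.

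For part~\ref{uue}, I would instead divide through by $|u|$ and group the factor $\sqrt{1-|u|^2}/|u|$ together. Inverting the sinh formula of Lemma~\ref{iwu}\ref{aba} gives $\sqrt{1-|u|^2}/|u|=1/\sinh\tfrac12\rho(u,0)$, while the remaining factor $\sqrt{1-|v|^2}$ is replaced as before using Lemma~\ref{iwu}\ref{abb}. This produces $\sinh\tfrac12\rho(u,0)$ in the denominator in place of the cosh term, yielding the second formula.

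The computation is entirely mechanical, so there is no genuine obstacle; the only point requiring care is matching the correct formula from Lemma~\ref{iwu} to each factor — the cosh formula for the factor associated with $v$ in both parts, and the cosh or sinh formula for the factor associated with $u$ according to whether one is computing $|u-v|$ or $|u-v|/|u|$.
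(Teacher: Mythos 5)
Your proposal is correct and is exactly the paper's own argument: the paper proves Lemma~\ref{fii} by replacing the square-root terms in the left-hand formula of \eqref{uis} using the two identities of Lemma~\ref{iwu}, in precisely the two ways you describe. Nothing is missing; the matching of the cosh formula to the $v$-factor and the sinh or cosh formula to the $u$-factor is the whole content of the proof.
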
 

We will apply Lemmas~\ref{iwu} and~\ref{fii} repeatedly, so it is handy to define
\[
s(u,v) = \sinh \tfrac12\rho(u,v) \quad\text{and}\quad c(u,v)=\cosh\tfrac12 \rho(u,v).
\]

Let us proceed with the proof of Theorem~\ref{pkc}. We can  assume that $f$ is not a conformal automorphism of $\mathbb{D}$ fixing the origin (a Euclidean rotation) because such maps are limits of sequences of holomorphic maps that are not conformal automorphisms (in the topology of compact convergence), and the inequality is preserved on taking this type of limit.

We define
\[
g(w) =
\begin{cases}
f(w)/w, & w\neq 0,\\
f'(0), & w=0. 
\end{cases} 
\]
Since $|f(w)|<|w|$ for $w\neq 0$, by  Schwarz's lemma, we see that $g$ is also a holomorphic map from $\mathbb{D}$ to itself.

Recall that $a,z\in\mathbb{D}\setminus\{0\}$. Then $|1-g(z)|\leq |1-g(a)|+|g(a)-g(z)|$. That is, 
\[
\frac{|z-f(z)|}{|z|} \leq \frac{|a-f(a)|}{|a|}+|g(a)-g(z)|.
\]
Applying Lemma~\ref{fii} to this inequality, we obtain
\[
\frac{s(f(z),z)}{s(z,0)c(f(z),0)} \leq \frac{s(f(a),a)}{s(a,0)c(f(a),0)}+\frac{s(g(a),g(z))}{c(g(a),0)c(g(z),0)}.
\]
Since $c(f(z),0)\leq c(z,0)$, by the Schwarz--Pick lemma, and $c(f(a),0)\geq 1$, we can rearrange this inequality to give

\begin{equation}\label{cat}
s(f(z),z) \leq s(z,0)c(z,0)\left(\frac{s(f(a),a)}{s(a,0)}+\frac{s(g(a),g(z))}{c(g(a),0)c(g(z),0)}\right).
\end{equation}
Next, observe that $\rho(g(z),0) \geq |\rho(g(a),0)-\rho(g(a),g(z))|$, so, since cosh is an even function,
\[
c(g(z),0) \geq c(g(a),0)c(g(a),g(z))-s(g(a),0)s(g(a),g(z)).
\]
Multiplying both sides by $c(g(a),0)$ and then applying the equations $c(g(a),0)^2=1/(1-|g(a)|^2)$ and $s(g(a),0)c(g(a),0)=|g(a)|/(1-|g(a)|^2)$ (from Lemma~\ref{iwu}), we see that
\begin{align*}
c(g(z),0)c(g(a),0) & \geq  c(g(a),0)^2c(g(a),g(z))-s(g(a),0)c(g(a),0)s(g(a),g(z))\\
& =  \frac{c(g(a),g(z))-|g(a)|s(g(a),g(z))}{1-|g(a)|^2}\\
&\geq \frac{c(g(a),g(z))-|g(a)|s(g(a),g(z))}{1+|g(a)|} \frac{|a|}{|a-f(a)|}\\
&= \frac{c(g(a),g(z))-|g(a)|s(g(a),g(z))}{1+|g(a)|} \frac{s(a,0)c(f(a),0)}{s(f(a),a)},
\end{align*}
where, in the last line, we have applied Lemma~\ref{fii}\ref{uue} again. Rearranging this, we find that 
\[
\frac{1}{c(g(z),0)c(g(a),0) }\leq \frac{1+|g(a)|}{c(g(a),g(z))-|g(a)|s(g(a),g(z))}\times \frac{s(f(a),a)}{s(a,0)}.
\]
We now combine this inequality with \eqref{cat} to give
\begin{equation}\label{dus}
s(f(z),z) \leq \frac{s(z,0)c(z,0)}{s(a,0)}\left(1+\frac{(1+|g(a)|)s(g(a),g(z))}{c(g(a),g(z))-|g(a)|s(g(a),g(z))}\right)s(f(a),a).
\end{equation}
The part in large brackets is equal to 
\[
\frac{c(g(a),g(z))+s(g(a),g(z))}{c(g(a),g(z))-|g(a)|s(g(a),g(z))} \leq \frac{c(g(a),g(z))+s(g(a),g(z))}{c(g(a),g(z))-s(g(a),g(z))} \leq e^{\rho(a,z)},
\]
where, for the last inequality, we applied the Schwarz--Pick lemma to $g$. Since $s(z,0)c(z,0)=\tfrac12\sinh \rho(z,0) \leq \tfrac14 e^{\rho(z,0)}$, we see that \eqref{dus} reduces to 
\begin{equation}\label{oop}
s(f(z),z) \leq \frac{e^{\rho(a,z)+\rho(z,0)}}{4s(a,0)}s(f(a),a).
\end{equation}
To finish, observe that Theorem~\ref{pkc} is clearly true if $\rho(f(z),z)<\rho(f(a),a)$, because $M>1$. Assume then that $\rho(f(z),z)\geq \rho(f(a),a)$. The function $x\longmapsto \sinh x/x$ is increasing for $x>0$, as one can prove by differentiating it, so 
\[
\frac{\sinh \tfrac12\rho(f(z),z)}{\tfrac12\rho(f(z),z)} \geq \frac{\sinh \tfrac12\rho(f(a),a)}{\tfrac12\rho(f(a),a)}.
\] 
Hence
\[
\frac{\rho(f(z),z)}{\rho(f(a),a)}\leq \frac{s(f(z),z)}{s(f(a),a)}.
\]
This inequality, together with \eqref{oop}, give the inequality of Theorem~\ref{pkc}, completing the proof.

\section{Holomorphic maps of the disc}

This section proves the main result, Theorem~\ref{cai}. In the following preliminary lemma, we refer to a conformal automorphism of $\mathbb{D}$ that is a hyperbolic M\"obius transformation as a \emph{hyperbolic automorphism}. Its \emph{axis} is the hyperbolic line connecting its two fixed points.

\begin{lemma}\label{qlo}
Let $c$ be a point that lies on the axis of a hyperbolic automorphism $h$ of $\mathbb{D}$. Then
\[
\rho(w,h(w))\leq e^{\rho(w,c)}\rho(c,h(c)),
\] 
for all $w\in\mathbb{D}$.
\end{lemma}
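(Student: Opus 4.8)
The plan is to reduce, by an isometric change of coordinates, to a standard model of a hyperbolic automorphism, to establish the exact formula for its displacement at an arbitrary point, and then to convert that identity into the stated inequality using the monotonicity argument already used in the proof of Theorem~\ref{pkc}. Since conformal automorphisms of $\mathbb{D}$ are hyperbolic isometries, and the three quantities $\rho(w,h(w))$, $\rho(c,h(c))$ and $\rho(w,c)$ are all invariant under conjugation by an isometry, I may transfer the problem to the upper half-plane $\mathbb{H}$ and conjugate $h$ so that its axis $A$ becomes the positive imaginary axis and $h(z)=\lambda z$ for some $\lambda=e^{\ell}>1$. Here $\ell>0$ is the translation length of $h$, and since $c$ lies on $A$ the displacement $\rho(c,h(c))$ equals $\ell$ regardless of which point of the axis $c$ happens to be.

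The central step, and the main obstacle, is to derive the displacement identity
\[
s(w,h(w)) = \cosh\rho(w,A)\, s(c,h(c)),
\]
in which $s$ is the notation from Section~2 and $\rho(w,A)$ denotes the hyperbolic distance from $w$ to the axis. Writing $w=x+iy$, I would compute both sides from the standard half-plane identity $\cosh\rho(z_1,z_2)=1+|z_1-z_2|^2/\big(2\,\Im z_1\,\Im z_2\big)$. Applied to $w$ and the foot of the perpendicular $i\sqrt{x^{2}+y^{2}}$ on $A$, this gives $\cosh\rho(w,A)=\sqrt{x^{2}+y^{2}}/y$; applied to $w$ and $h(w)=\lambda w$, together with the simplification $(1-\lambda)^{2}/(4\lambda)=\sinh^{2}\tfrac12\ell$, it gives $\sinh^{2}\tfrac12\rho(w,h(w))=\sinh^{2}\tfrac12\ell\cdot\cosh^{2}\rho(w,A)$, which is the squared form of the displayed identity (recalling $s(c,h(c))=\sinh\tfrac12\ell$). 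The remainder of the argument is soft.

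With the identity established the inequality follows quickly. Because $\cosh\rho(w,A)\geq1$, the identity forces $\rho(w,h(w))\geq\ell=\rho(c,h(c))$, so the monotonicity of $x\longmapsto\sinh x/x$ on $(0,\infty)$ used in the proof of Theorem~\ref{pkc} yields
\[
\frac{\rho(w,h(w))}{\rho(c,h(c))}\leq\frac{s(w,h(w))}{s(c,h(c))}=\cosh\rho(w,A).
\]
Finally, since $c\in A$ we have $\rho(w,A)\leq\rho(w,c)$, and as $\cosh$ is increasing on $[0,\infty)$ with $\cosh t\leq e^{t}$ there, I conclude $\rho(w,h(w))\leq e^{\rho(w,c)}\rho(c,h(c))$, which is the assertion of the lemma.
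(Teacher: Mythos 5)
Your proof is correct and takes essentially the same route as the paper's: the displacement identity $\sinh\tfrac12\rho(w,h(w))=\cosh\rho(w,A)\,\sinh\tfrac12\rho(c,h(c))$ that you derive by direct computation in the half-plane model is exactly the formula the paper quotes from Beardon's \emph{The geometry of discrete groups} (Theorem~7.35.1), and the remaining steps---monotonicity of $x\longmapsto\sinh x/x$, then $\rho(w,A)\leq\rho(w,c)$ and $\cosh t\leq e^{t}$---coincide with the paper's proof. The only difference is that you verify the key identity from scratch rather than by citation, making the argument self-contained.
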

\begin{proof}
Let $\gamma$ be the axis of $h$. By \cite{Be1995}*{Theorem~7.35.1}, we have 
\[
\sinh \tfrac{1}{2}\rho (w,h(w))= \cosh \rho(w,\gamma) \sinh\tfrac{1}{2}\rho(c,h(c)),
\]
for $c\in\gamma$ and $w\in\mathbb{D}$. Now, as we mentioned earlier, the function $x\longmapsto \sinh x/x$ is increasing for $x>0$, and $\rho(c,h(c))\leq \rho(w,h(w))$, so
\[
\frac{\rho (w,h(w))}{\rho(c,h(c))}\leq \frac{\sinh \tfrac{1}{2}\rho (w,h(w))}{\sinh\tfrac{1}{2}\rho(c,h(c))}=\cosh \rho(w,\gamma)\leq e^{\rho(w,\gamma)}.
\]
The result then follows from the inequality $\rho(w,\gamma)\leq \rho(w,c)$.
\end{proof}

Now we prove Theorem~\ref{cai}. Suppose, then, that $f$ is a holomorphic self-map of $\mathbb{D}$ and that $a$, $b$ and $z$ are points in $\mathbb{D}$, with $a\neq b$. If $f$ does not fix $b$, then there is a unique hyperbolic line $\gamma$ through $b$ and $f(b)$. Let $h$ be the hyperbolic automorphism of $\mathbb{D}$ with axis $\gamma$ that satisfies $h f(b)=b$. If $f$ fixes $b$, then we define $h$ to be the identity function.  So, applying Theorem~\ref{pkc} to $h f$, we see that 
\[
\rho(h f(z),z) \leq M\rho(h f(a),a),\quad\text{where}\quad  M= \frac{e^{\rho(a,z)+\rho(z,b)}}{4\sinh\tfrac12\rho(a,b)}.
\]
Hence
\begin{align*}
\rho(f(z),z) &\leq \rho(f(z),h f(z))+\rho(h f(z),z) \\
&\leq \rho(f(z),h f(z))+M\rho(h f(a),a) \\
& \leq  \rho(f(z),h f(z))+M\rho(f(a),h f(a)) +M\rho(f(a),a).
\end{align*}
Next, for $u\in\mathbb{D}$, Lemma~\ref{qlo} (with $w=f(u)$ and $c=f(b)$) tells us that if $f(b)\neq b$, then 
\[
\rho(f(u),h f(u)) \leq e^{\rho(f(u),f(b))} \rho(f(b),h f(b)) \leq e^{\rho(u,b)}\rho(f(b),b),
\] 
using the Schwarz--Pick lemma for the final inequality. Clearly, this inequality also holds if $f(b)=b$. Since $e^{\rho(a,b)}>4\sinh\tfrac12\rho(a,b)$, we see that
\begin{align}\label{now}
\rho(f(z),z) &\leq e^{\rho(z,b)}\rho(f(b),b) + Me^{\rho(a,b)}\rho(f(b),b)+M\rho(f(a),a) \notag\\
&\leq (e^{\rho(z,b)}+Me^{\rho(a,b)})(\rho(f(a),a)+\rho(f(b),b)) \notag\\
&\leq \frac{e^{\rho(a,z)+\rho(a,b)+\rho(z,b)}}{2\sinh\tfrac12\rho(a,b)}(\rho(f(a),a)+\rho(f(b),b)).
\end{align}
Theorem~\ref{cai} can be deduced from \eqref{now}, since $\sinh x \geq x$ for all $x>0$. However, we highlight the slightly stronger inequality of \eqref{now} for use later.

\section{Holomorphic maps of the punctured disc}

It remains to prove Theorem~\ref{puc}, and this final section is dedicated to that one task. Recall that  $\lambda^*(z)=-1/(|z|\log |z|)$ is the density  for the hyperbolic metric on the punctured unit disc. We use the following trivial estimates.

\begin{lemma}\label{eea}
If $z\in\mathbb{D}^*$, then 
\begin{enumerate}
\item $\lambda^*(z) \geq e$,
\item $\lambda^*(z)\geq -\log|z|$,
\item $\lambda^*(z)\geq -\dfrac{1}{\log|z|}$.
\end{enumerate}
\end{lemma}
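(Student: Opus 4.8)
The plan is to reduce all three estimates to elementary inequalities about the single-variable function $t\longmapsto e^t/t$ by a change of variable. Writing $r=|z|$, so that $0<r<1$, I would set $t=-\log r$; then $t>0$ and $r=e^{-t}$, so the density becomes
\[
\lambda^*(z) = -\frac{1}{|z|\log|z|} = \frac{1}{rt} = \frac{e^t}{t}.
\]
After this substitution each of the three claims is a statement about $e^t/t$ for $t>0$, and the task is purely one of calculus.

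For part (iii), the claim $\lambda^*(z)\geq -1/\log|z|$ becomes $e^t/t\geq 1/t$, which reduces to $e^t\geq 1$ and is immediate for $t>0$. For part (i), the claim $\lambda^*(z)\geq e$ becomes the assertion that $e$ is the minimum value of $e^t/t$ on $(0,\infty)$; I would verify this by differentiating, since $\tfrac{d}{dt}(e^t/t)=e^t(t-1)/t^2$ vanishes only at $t=1$, is negative on $(0,1)$ and positive on $(1,\infty)$, so the global minimum is $e^1/1=e$.

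The only part requiring more than a one-line observation is (ii), where $\lambda^*(z)\geq -\log|z|$ becomes $e^t/t\geq t$, that is, $e^t\geq t^2$. The basic bound $e^t\geq 1+t$ is not quite strong enough here, so I would instead use convexity of the exponential in the form $e^s\geq es$ for all $s$ (the tangent-line inequality at $s=1$); applying this with $s=t/2$ and squaring the positive quantities gives $e^t\geq (e^2/4)t^2>t^2$, which yields the claim on dividing by $t$. This is the step I expect to be the main, if modest, obstacle, since it is the one place where the naive linear lower bound for $e^t$ fails and a slightly sharper estimate is needed.
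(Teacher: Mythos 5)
Your proof is correct in all three parts. Note that the paper itself offers no argument at all for this lemma---it is introduced with the words ``We use the following trivial estimates'' and left unproved---so there is no proof to compare against; your write-up simply supplies the omitted verification. The substitution $t=-\log|z|$, which turns $\lambda^*(z)$ into $e^t/t$ for $t>0$, is a clean way to unify the three claims: (iii) becomes $e^t\geq 1$, (i) becomes the statement that $e^t/t$ has minimum value $e$ at $t=1$, and (ii) becomes $e^t\geq t^2$. Your handling of (ii) is sound, and it is worth observing that the tangent-line inequality $e^s\geq es$ you invoke is precisely part (i) restated, so your argument for (ii) amounts to applying (i) at $t/2$ and squaring: $e^t=\bigl(e^{t/2}\bigr)^2\geq \tfrac{e^2}{4}t^2>t^2$. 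This is exactly the kind of one-paragraph justification the authors presumably had in mind when they called the estimates trivial.
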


Let us suppose, as stated in Theorem~\ref{puc}, that $f$ is a holomorphic self-map of $\mathbb{D}^*$, $h$ is a holomorphic self-covering map of $\mathbb{D}^*$, and $\degree(f)$ and $\degree(h)$ are both equal to some positive integer $m$. By post-composing $f$ and $h$ with a suitable rotation of $\mathbb{D}^*$ about $0$ (a hyperbolic isometry of $\mathbb{D}^*$) we can assume that $h(z)=z^m$.

Suppose that $a,z\in\mathbb{D}^*$. Let $\pi\colon\mathbb{H}\longrightarrow\mathbb{D}^*$ be the universal covering map $\pi(\zeta)=e^{2\pi i \zeta}$.  We denote the hyperbolic metric on $\mathbb{H}$ by $\rho$. Let $\tilde{a}$ be any point in $\mathbb{H}$ such that $\pi(\tilde{a})=a$. Since 
\[
\rho^*(z,a)=\inf\{\rho(\zeta,\tilde{a}):\zeta \in\mathbb{H}\text{ and }\pi(\zeta)=z\},
\]
 we can choose $\tilde{z}\in\mathbb{H}$ such that $\pi(\tilde{z})=z$ and $\rho(\tilde{z},\tilde{a})=\rho^*(z,a)$.

We now lift the map $f$ to a holomorphic map $\tilde{f}\colon\mathbb{H}\longrightarrow\mathbb{H}$ with $\pi \circ \tilde{f}=f\circ \pi$. The map $\tilde{f}$ satisfies $\tilde{f}(\zeta+1)=\tilde{f}(\zeta)+m$, for all $\zeta\in \mathbb{H}$, since $f$ has degree $m$. We  also lift the map $h$ to the holomorphic map $\tilde{h}(\zeta)=m\zeta$, which is a hyperbolic isometry of $\mathbb{H}$. Observe that $\pi(\tilde{f}(\tilde{a}))=f(a)$ and $\pi(\tilde{h}(\tilde{a}))=h(a)$. By replacing $\tilde{f}$ with a map that is the composition of $\tilde{f}$ followed by a suitable integer translation (also a lift of $f$), we can assume that $\rho(\tilde{f}(\tilde{a}),\tilde{h}(\tilde{a}))= \rho^*(f(a),h(a))$. 

Next we apply the slightly stronger version of Theorem~\ref{cai} given by inequality \eqref{now} to the function $\tilde{h}^{-1}\circ\tilde{f}$ and the points $\tilde{a}$, $\tilde{a}+1$ and $\tilde{z}$. We obtain
\[
\rho(\tilde{h}^{-1}\circ\tilde{f}(\tilde{z}),\tilde{z})\leq K\big(\rho(\tilde{h}^{-1}\circ\tilde{f}(\tilde{a}),\tilde{a})+\rho(\tilde{h}^{-1}\circ\tilde{f}(\tilde{a}+1),\tilde{a}+1)\big),
\]
where $K=e^{\rho(\tilde{z},\tilde{a})+\rho(\tilde{a},\tilde{a}+1)+\rho(\tilde{a}+1,\tilde{z})}/(2\sinh\tfrac12\rho(\tilde{a},\tilde{a}+1))$. Since $\tilde{h}$ is a hyperbolic isometry of $\mathbb{H}$, we see that
\begin{align*}
\rho(\tilde{f}(\tilde{z}),\tilde{h}(\tilde{z})) &\leq K\big(\rho(\tilde{f}(\tilde{a}),\tilde{h}(\tilde{a}))+\rho(\tilde{f}(\tilde{a}+1),\tilde{h}(\tilde{a}+1))\big)\\
&= K\big(\rho(\tilde{f}(\tilde{a}),\tilde{h}(\tilde{a}))+\rho(\tilde{f}(\tilde{a})+m,\tilde{h}(\tilde{a})+m)\big)\\
&= 2K\rho(\tilde{f}(\tilde{a}),\tilde{h}(\tilde{a})).
\end{align*}
Now
\begin{align*}
K &= \frac{\exp\left(\rho(\tilde{z},\tilde{a})+\rho(\tilde{a},\tilde{a}+1)+\rho(\tilde{a}+1,\tilde{z})\right)}{2\sinh\tfrac12\rho(\tilde{a},\tilde{a}+1)}\\
&\leq \frac{\exp\left(2\rho(\tilde{z},\tilde{a})+2\rho(\tilde{a},\tilde{a}+1)\right)}{2\sinh\tfrac12\rho(\tilde{a},\tilde{a}+1)}\\
&\leq 2e^{2\rho(\tilde{z},\tilde{a})}\frac{\cosh^2\rho(\tilde{a},\tilde{a}+1)}{\sinh\tfrac12\rho(\tilde{a},\tilde{a}+1)}.
\end{align*}
We can write this expression in Euclidean terms by using the following standard formulas for the hyperbolic metric on $\mathbb{H}$, taken from \cite{BeMi2007}*{Theorem~7.4}:
\[
\cosh\rho(u,v) = 1 +\frac{|u-v|^2}{2\Im u\Im v},\qquad \sinh \tfrac12\rho(u,v)  = \frac{|u-v|}{2\sqrt{\Im u \Im v}},
\]
where $u,v\in\mathbb{H}$. Thus
\[
K \leq 2e^{2\rho(\tilde{z},\tilde{a})}\frac{(1+\tfrac12(\Im\tilde{a})^{-2})^2}{\tfrac12(\Im\tilde{a})^{-1}}= e^{2\rho(\tilde{z},\tilde{a})}(4\Im\tilde{a}+4(\Im\tilde{a})^{-1}+(\Im\tilde{a})^{-3}).
\]
Now $e^{2\pi i \tilde{a}}=a$, so  $e^{-2\pi\Im\tilde{a}}=|a|$. Hence  $\Im\tilde{a}= -(\log |a|)/(2\pi)$, and we can apply Lemma~\ref{eea} to deduce that $\Im\tilde{a}\leq \lambda^*(a)/(2\pi)$ and $(\Im\tilde{a})^{-1}\leq 2\pi\lambda^*(a)$. Since $\lambda^*(a)\geq e$, we obtain
\[
4\Im\tilde{a}+4(\Im\tilde{a})^{-1}+(\Im\tilde{a})^{-3} \leq (1+8\pi)\lambda^*(a)+(2\pi)^3\lambda^*(a)^3\leq 252\lambda^*(a)^3.
\]
Therefore
\[
\rho(\tilde{f}(\tilde{z}),\tilde{h}(\tilde{z})) \leq 504\lambda^*(a)^3 e^{2\rho(\tilde{z},\tilde{a})}\rho(\tilde{f}(\tilde{a}),\tilde{h}(\tilde{a})).
\]
The proof of Theorem~\ref{puc} is complete on observing that 
\[
\rho^*(f(z),h(z))\leq \rho(\tilde{f}(\tilde{z}),\tilde{h}(\tilde{z})), \quad \rho^*(f(a),h(a))=\rho(\tilde{f}(\tilde{a}),\tilde{h}(\tilde{a}))\quad\text{and}\quad\rho^*(z,a)=\rho(\tilde{z},\tilde{a}).
\]

\begin{bibdiv}
\begin{biblist}

\bib{Ab1989}{book}{
   author={Abate, Marco},
   title={Iteration theory of holomorphic maps on taut manifolds},
   series={Research and Lecture Notes in Mathematics. Complex Analysis and
   Geometry},
   publisher={Mediterranean Press, Rende},
   date={1989},
   pages={xvii+417},
}

\bib{Be1995}{book}{
   author={Beardon, Alan F.},
   title={The geometry of discrete groups},
   series={Graduate Texts in Mathematics},
   volume={91},
   note={Corrected reprint of the 1983 original},
   publisher={Springer-Verlag, New York},
   date={1995},
   pages={xii+337},
   isbn={0-387-90788-2},
}

\bib{BeMi2007}{article}{
   author={Beardon, A. F.},
   author={Minda, D.},
   title={The hyperbolic metric and geometric function theory},
   conference={
      title={Quasiconformal mappings and their applications},
   },
   book={
      publisher={Narosa, New Delhi},
   },
   date={2007},
   pages={9--56},
}

\bib{BeSh2010}{article}{
   author={Beardon, Alan F.},
   author={Short, Ian},
   title={Norms of M\"obius maps},
   journal={Bull. Lond. Math. Soc.},
   volume={42},
   date={2010},
   number={3},
   pages={499--505},
   issn={0024-6093},
}

\bib{Sc1993}{book}{
   author={Schiff, Joel L.},
   title={Normal families},
   series={Universitext},
   publisher={Springer-Verlag, New York},
   date={1993},
   pages={xii+236},
   isbn={0-387-97967-0},
}

\end{biblist}
\end{bibdiv}

\end{document}